\documentclass{article} 
\usepackage{nips14submit_e,times}
\usepackage{hyperref}
\usepackage{url}

\usepackage{cite}
\usepackage{graphicx}
\usepackage{psfrag}
\usepackage{epsfig}
\usepackage{stfloats}
\usepackage{amsfonts,amssymb,amsmath,bm,paralist,theorem,cite,ifthen,color,amsmath}
\usepackage{array}
\usepackage{caption}
\usepackage{calc}
\usepackage{longtable}
\usepackage{subfigure}
\usepackage{mathrsfs}
\usepackage{epsfig}
\usepackage{algorithm}
\usepackage{algorithmic}

\newcommand{\xhat}{\widehat{x}}
\newcommand{\st}{{\rm s.t.}}
\newcommand{\re}{\mathbb{R}}

\newcommand{\xbar}{\bar{x}}
\newcommand{\cX}{\mathcal{X}}

\newtheorem{thm}{Theorem}
\newtheorem{rmk}{Remark}
\newtheorem{lem}{Lemma}
\newenvironment{proof}[1][Proof]{\begin{trivlist}
\item[\hskip \labelsep {\bfseries #1}]}{\end{trivlist}}

\title{Parallel Successive Convex Approximation for Nonsmooth Nonconvex Optimization}

\author{
Meisam Razaviyayn\thanks{Electrical Engineering Department, Stanford University} \\
\texttt{meisamr@stanford.edu} 
\And
Mingyi Hong\thanks{Industrial and Manufacturing Systems Engineering, Iowa State University} \\
\texttt{mingyi@iastate.edu} 
\And
Zhi-Quan Luo\thanks{Department of Electrical and Computer Engineering, University of Minnesota}\\
\texttt{luozq@umn.edu} 
\And
Jong-Shi Pang\thanks{Department of Industrial and Systems Engineering,  University of Southern California}\\
\texttt{jongship@usc.edu}
}

\nipsfinalcopy 

\begin{document}

\maketitle
\begin{abstract}

Consider the problem of minimizing the sum of a smooth (possibly non-convex) and a convex (possibly nonsmooth) function involving a large number of variables. A popular approach to solve this problem is the block coordinate descent (BCD) method whereby at each iteration only one variable block is updated while the remaining variables are held fixed. With the recent advances in the developments of the multi-core parallel processing technology, it is desirable to parallelize the BCD method by allowing multiple blocks to be updated simultaneously at each iteration of the algorithm. In this work, we propose an inexact parallel BCD approach where at each iteration, a subset of the variables is updated in parallel by minimizing convex approximations of the original objective function. We investigate the convergence of this parallel BCD method for both randomized and cyclic variable selection rules. We analyze the asymptotic and non-asymptotic convergence behavior of the algorithm for both convex and non-convex objective functions. The numerical experiments suggest that for a special case of Lasso minimization problem, the cyclic block selection rule can outperform the randomized rule.
\end{abstract}

\section{Introduction}
\label{sec:intro}
Consider the following optimization problem
\normalsize
\begin{equation}
\label{eq:OriginalProblem}
\min_{x} \quad h(x) \triangleq f(x_1,\ldots,x_n) + \sum_{i=1}^ng_i(x_i)\quad \;\quad \st \;\; x_i \in \mathcal{X}_i, \; i=1,2,\ldots, n,
\end{equation}
\normalsize
where $\mathcal{X}_i \subseteq  \re^{m_i}$ is a closed convex set; the function $f: \prod_{i=1}^{n}\mathcal{X}_i\to\mathbb{R}$ is a smooth function (possibly non-convex); and $g(x) \triangleq \sum_{i=1}^n g_i(x_i)$ is a separable convex function (possibly nonsmooth). The above optimization problem appears in various fields such as machine learning, signal processing, wireless communication, image processing, social networks, and bioinformatics, to name just a few. These optimization problems are typically of huge size and should be solved expeditiously.

A popular approach for solving the above multi-block optimization problem is the block coordinate descent (BCD) approach, where at each iteration of BCD, only one of the block variables is updated, while the remaining blocks are held fixed. Since only one block is updated at each iteration, the per-iteration storage and computational demand of the algorithm is low, which is desirable in huge-size problems. Furthermore, as observed in \cite{nesterov2012efficiency,richtarik2012efficient,lee2013efficient}, these methods  perform particulary well in practice.

The availability of high performance multi-core computing platforms makes it increasingly desirable to develop parallel optimization methods. One category of such parallelizable methods is the (proximal) gradient methods. These methods are parallelizable in nature \cite{necoara2013efficient, nesterov2013gradient, tseng2009coordinate,beck2009fast,wright2009sparse}; however, they are equivalent to successive minimization of a quadratic approximation of the objective function which may not be tight; and hence suffer from low convergence speed in some practical applications \cite{facchinei2013flexible}.

To take advantage of the BCD method and parallel multi-core technology, different parallel BCD algorithms have been recently proposed in the literature. In particular, the references \cite{bradley2011parallel,scherrer2012feature,scherrer2012scaling} propose parallel coordinate descent minimization methods for $\ell_1$-regularized convex optimization problems.  Using the greedy (Gauss-Southwell) update rule, the recent works \cite{facchinei2013flexible,peng2013parallel} propose parallel BCD type methods for general composite optimization problems. In contrast, references \cite{necoaradistributed,richtarik2012parallel,richtarik2012efficient ,richtarik2013optimal, fercoq2013accelerated,fercoq2014fast,fercoq2013smooth, patrascu2013random} suggest randomized block selection rule, which is more amenable to big data optimization problems, in order to parallelize the BCD method.

Motivated by \cite{facchinei2013flexible, razaviyayn2013unified,richtarik2012parallel,nesterov2012efficiency}, we propose a parallel inexact BCD method where at each iteration of the algorithm, a subset of the blocks is updated by minimizing locally tight approximations of the objective function. Asymptotic and non-asymptotic convergence analysis of the algorithm is presented in both convex and non-convex cases for different variable block selection rules. The proposed parallel algorithm is synchronous, which is different than the existing lock-free methods in \cite{niu2011hogwild,liu2013asynchronous}.

The contributions of this work are as follows:
\begin{itemize}
\item A parallel block coordinate descent method is proposed for non-convex nonsmooth problems. To the best of our knowledge, reference \cite{facchinei2013flexible} is the only paper in the literature that focuses on parallelizing BCD for non-convex nonsmooth problems. This reference utilizes greedy block selection rule which requires search among all blocks as well as communication among processing nodes in order to find the best blocks to update. This requirement can be demanding in practical scenarios where the communication among nodes are costly or when the number of blocks is huge. In fact, this high computational cost motivated the authors of \cite{facchinei2013flexible} to develop further inexact update strategies to efficiently alleviating the high computational cost of the greedy block selection rule.
\item The proposed parallel BCD algorithm allows both cyclic and randomized block variable selection rules. The deterministic ({\it cyclic}) update rule is different than the existing parallel randomized or greedy BCD methods in the literature; see, e.g., \cite{necoaradistributed,richtarik2012parallel,richtarik2012efficient ,richtarik2013optimal, fercoq2013accelerated,fercoq2014fast,fercoq2013smooth, patrascu2013random, facchinei2013flexible, peng2013parallel}. Based on our numerical experiments, this update rule is beneficial in solving the Lasso problem.
\item The proposed method not only works with the constant step-size selection rule, but also with the diminishing step-sizes which is desirable when the Lipschitz constant of the objective function is not known.
\item Unlike many existing algorithms in the literature, e.g. \cite{necoaradistributed,richtarik2012parallel, peng2013parallel}, our parallel BCD algorithm utilizes the general approximation of the original function which includes the linear/proximal approximation of the objective as a special case. The use of general approximation instead of the linear/proximal approximation offers more flexibility and results in efficient algorithms for particular practical problems; see \cite{razaviyayn2013unified,mairal2013optimization} for specific examples.
\item We present an iteration complexity analysis of the algorithm for both convex and non-convex scenarios. Unlike the existing non-convex parallel methods in the literature such as \cite{facchinei2013flexible} which only guarantee the asymptotic behavior of the algorithm, we provide non-asymptotic guarantees on the convergence of the algorithm as well.
\end{itemize}

\section{Parallel Successive Convex Approximation}
As stated in the introduction section, a popular approach for solving \eqref{eq:OriginalProblem} is the BCD method where at iteration $r+1$ of the algorithm, the block variable $x_i$ is updated by solving the following subproblem
\normalsize
\begin{align}
x_i^{r+1} = \arg \min_{x_i \in \mathcal{X}_i} \quad
h(x_1^r,\ldots,x_{i-1}^r,x_i,x_{i+1}^{r},\ldots,x_n^{r}).\label{EQ:BCD}
\end{align}
\normalsize
In many practical problems, the update rule~\eqref{EQ:BCD} is not in closed form and hence not computationally cheap. One popular approach is to replace the function $h(\cdot)$ with a well-chosen local convex approximation $\widetilde{h}_i(x_i,x^r)$ in \eqref{EQ:BCD}. That is, at iteration $r+1$, the block variable $x_i$ is updated by
\normalsize
\begin{align}
x_i^{r+1} = \arg \min_{x_i \in \mathcal{X}_i} \quad \widetilde{h}_i(x_i, x^r), \label{EQ:SCA}
\end{align}
\normalsize
where $\widetilde{h}_i(x_i,x^r)$ is a convex (possibly upper-bound) approximation of the function $h(\cdot)$ with respect to the $i$-th block around the current iteration $x^r$. This approach, also known as {\it block successive convex approximation} or {\it block successive upper-bound minimization} \cite{razaviyayn2013unified}, has been widely used in different applications; see \cite{razaviyayn2013unified,mairal2013optimization} for more details and different useful approximation functions. In this work, we assume that the approximation function $\widetilde{h}_i(\cdot,\cdot)$ is of the following form:
\normalsize
\begin{equation}
\widetilde{h}_i(x_i,y) = \widetilde{f}_i(x_i,y) + g_i(x_i).
\end{equation}
\normalsize
Here $\widetilde{f}_i(\cdot,y)$ is an approximation of the function $f(\cdot)$ around the point $y$ with respect to the $i$-th block. We further assume that $\widetilde{f}_i(x_i,y):\cX_i\times \cX \rightarrow \mathbb{R}$ satisfies the following assumptions:
\begin{itemize}
\item $\widetilde{f}_i(\cdot,y)$ is continuously differentiable and uniformly strongly convex with parameter $\tau$, i.e.,
\normalsize
$
\widetilde{f}_i(x_i,y) \geq \widetilde{f}_i(x_i',y) + \langle\nabla_{x_i} \widetilde{f}_i (x_i',y),x_i-x_i'\rangle + \frac{\tau}{2} \|x_i - x'_i\|^2,\;\forall x_i,x_i' \in \mathcal{X}_i, \;\forall y\in \mathcal{X}
$\normalsize
\item {\it Gradient consistency assumption}: $\nabla_{x_i} \widetilde{f}_i(x_i,x) = \nabla_{x_i} f(x),\;\forall x\in \cX$
\item $\nabla_{x_i} \widetilde{f}_i(x_i,\cdot)$ is Lipschitz continuous on $\cX$ for all $x_i \in \cX_i$ with constant $\widetilde{L}$, i.e.,
    \normalsize
$
\|\nabla_{x_i} \widetilde{f}_i(x_i,y) - \nabla_{x_i} \widetilde{f}_i(x_i,z)\| \leq \widetilde{L} \|y-z\|,\quad \forall y,z \in \mathcal{X}, \;\forall x_i\in \mathcal{X}_i,\;\forall i.
$\normalsize
\end{itemize}
For instance, the following traditional proximal/quadratic approximations of $f(\cdot)$ satisfy the above assumptions when the feasible set is compact and $f(\cdot)$ is twice continuously differentiable:
\begin{itemize}
\item \normalsize $\widetilde{f}(x_i,y) = \langle\nabla_{y_i}f(y),x_i-y_i\rangle + \frac{\alpha}{2}\|x_i-y_i\|^2$.
\item \normalsize$\widetilde{f}(x_i,y) = f(x_i,y_{-i}) + \frac{\alpha}{2}\|x_i-y_i\|^2$, \normalsize for $\alpha$ large enough.
\end{itemize}
For other practical useful approximations of $f(\cdot)$ and the stochastic/incremental counterparts, see \cite{razaviyayn2013unified, mairal2014incremental,razaviyayn2013stochastic}. 

With the recent advances in the development of parallel processing machines, it is desirable to take the advantage of multi-core machines by updating multiple blocks simultaneously in~\eqref{EQ:SCA}. Unfortunately, naively updating multiple blocks simultaneously using the approach~\eqref{EQ:SCA} does not result in a convergent algorithm. Hence, we suggest to modify the update rule by using a well-chosen step-size. More precisely, we propose Algorithm~\ref{Alg:PSCA} for solving the optimization problem~\eqref{eq:OriginalProblem}.

\begin{algorithm}
\caption{Parallel Successive Convex Approximation (PSCA) Algorithm}
\label{Alg:PSCA}
\begin{algorithmic}
\STATE find a feasible point $x^0\in \mathcal{X}$ and set $r = 0$
\STATE {\bf for} $r=0,1,2,\ldots$ {\bf do}
\STATE \quad choose a subset \normalsize $S^r \subseteq \{1,\ldots,n\}$\normalsize
\STATE \quad calculate \normalsize$\xhat_i^r = \arg \min_{x_i \in \mathcal{X}_i} \;\widetilde{h}_i(x_i, x^r),\; \forall i \in S^r$ \normalsize
\STATE \quad set \normalsize $x_i^{r+1} = x_i^r + \gamma^r (\xhat_i^r - x_i^r),\; \forall i \in S^r,$ \normalsize and set \normalsize $x_i^{r+1} = x_i^{r}, \; \forall\; i \notin S^r$ \normalsize
\STATE {\bf end for}
\end{algorithmic}
\end{algorithm}

The procedure of selecting the subset $S^r$ is intentionally left unspecified in Algorithm~\ref{Alg:PSCA}. This selection could be based on different rules. Reference \cite{facchinei2013flexible} suggests the greedy variable selection rule where at each iteration of the algorithm in \cite{facchinei2013flexible}, the best response of all the variables are calculated and at the end, only the block variables with the largest  amount of improvement are updated. A drawback of this approach is the overhead caused by the calculation of all of the best responses at each iteration; this overhead is especially computationally demanding when the size of the problem is huge. In contrast to \cite{facchinei2013flexible}, we suggest the following {\it randomized} or {\it cyclic} variable selection rules:
\begin{itemize}
\item  {\bf Cyclic}: Given the partition \normalsize $\{\mathcal{T}_0,\ldots,\mathcal{T}_{m-1}\}$ \normalsize of the set \normalsize$\{1,2,\ldots,n\}$ \normalsize with \normalsize$\mathcal{T}_i \bigcap\mathcal{T}_j = \emptyset, \;\forall i\neq j$ \normalsize and \normalsize$\bigcup_{\ell =0}^{m-1}\mathcal{T}_\ell = \{1,2,\ldots,n\}$, \normalsize we say the choice of the variable selection is {\it cyclic} if
\normalsize
\[
S^{mr+\ell} = \mathcal{T}_{\ell},\quad \forall \ell = 0,1,\ldots,m-1 \;{\rm and}\;\forall r,
\]
\normalsize
\item {\bf Randomized:} The variable selection rule  is called {\it randomized} if at each iteration the variables are chosen randomly from the previous iterations so that
\[
Pr(j\in S^r \mid x^r,x^{r-1},\ldots,x^0)= p_j^r \geq p_{\min} > 0, \quad\forall j=1,2,\ldots,n, \;\forall r.
\]
\end{itemize}

%
%
%

%

\section{Convergence Analysis: Asymptotic Behavior}
We first make a standard assumption that  $\nabla f(\cdot)$ is Lipschitz continuous with constant $L_{\nabla f}$, i.e.,
\normalsize
\[
\|\nabla f(x) - \nabla f(y)\| \leq L_{\nabla f} \|x-y\|,
\]
\normalsize
and assume that $-\infty < \inf_{x\in \mathcal{X}} h(x) $.
Let us also define $\xbar$ to be a {\it stationary point} of~\eqref{eq:OriginalProblem} if $\exists \; d \in \partial g(\xbar)$ such that $\langle \nabla f(\xbar) + d, x-\xbar\rangle \geq 0, \;\forall x \in \cX$, i.e., the first order optimality condition is satisfied at the point $\xbar$. The following lemma will help us to study the asymptotic convergence of the PSCA algorithm.

\begin{lem}
\label{lem:xhatLipschitz}
\cite[Lemma 2]{facchinei2013flexible} Define the mapping $\xhat(\cdot): \cX \mapsto \cX$ as $\xhat(y) = \left(\xhat_i(y)\right)_{i=1}^n$ with
$
\xhat_i(y) = \arg\min_{x_i\in \mathcal{X}_i} \widetilde{h}_i(x_i,y).
$
Then the mapping $\xhat(\cdot)$ is  Lipschitz continuous with constant $\widehat{L} = \frac{\sqrt{n} \widetilde{L}}{\tau}$, i.e.,
\[
\|\xhat(y) - \xhat(z)\| \leq \widehat{L}\|y-z\|,\;\forall y,z \in \cX.
\]
\end{lem}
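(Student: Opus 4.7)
The plan is to treat each block separately, obtain a per-block Lipschitz bound $\|\xhat_i(y)-\xhat_i(z)\|\le (\widetilde L/\tau)\|y-z\|$, and then aggregate across the $n$ blocks to pick up the $\sqrt n$ factor.

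For a fixed index $i$, I would start from the first-order optimality conditions for the strongly convex problems defining $\xhat_i(y)$ and $\xhat_i(z)$: there exist subgradients $d_i(y)\in\partial g_i(\xhat_i(y))$ and $d_i(z)\in\partial g_i(\xhat_i(z))$ such that
\[
\langle \nabla_{x_i}\widetilde f_i(\xhat_i(y),y)+d_i(y),\,x_i-\xhat_i(y)\rangle\ge 0,\quad \langle \nabla_{x_i}\widetilde f_i(\xhat_i(z),z)+d_i(z),\,x_i-\xhat_i(z)\rangle\ge 0
\]
for all $x_i\in\mathcal X_i$. Substituting $x_i=\xhat_i(z)$ in the first and $x_i=\xhat_i(y)$ in the second and adding eliminates the constraint and gives
\[
\langle \nabla_{x_i}\widetilde f_i(\xhat_i(y),y)-\nabla_{x_i}\widetilde f_i(\xhat_i(z),z),\,\xhat_i(z)-\xhat_i(y)\rangle+\langle d_i(y)-d_i(z),\,\xhat_i(z)-\xhat_i(y)\rangle\ge 0.
\]

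Next, I would exploit monotonicity of $\partial g_i$ to discard the subgradient term (it contributes a non-positive quantity on the left-hand side) and then split the gradient difference through the intermediate point $\nabla_{x_i}\widetilde f_i(\xhat_i(z),y)$. The first piece is controlled by strong convexity of $\widetilde f_i(\cdot,y)$ with modulus $\tau$, producing a term $-\tau\|\xhat_i(y)-\xhat_i(z)\|^2$; the second piece is controlled by Lipschitz continuity of $\nabla_{x_i}\widetilde f_i(\xhat_i(z),\cdot)$ with constant $\widetilde L$, together with Cauchy--Schwarz. After rearrangement one dividing factor of $\|\xhat_i(y)-\xhat_i(z)\|$ cancels and I obtain the blockwise bound
\[
\|\xhat_i(y)-\xhat_i(z)\|\le \frac{\widetilde L}{\tau}\,\|y-z\|.
\]

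To finish, I would square and sum over $i=1,\ldots,n$ and take a square root:
\[
\|\xhat(y)-\xhat(z)\|^2=\sum_{i=1}^n\|\xhat_i(y)-\xhat_i(z)\|^2\le n\,\frac{\widetilde L^2}{\tau^2}\|y-z\|^2,
\]
which yields the claimed constant $\widehat L=\sqrt n\,\widetilde L/\tau$. The main delicacy is the subgradient manipulation: one has to be careful that $d_i(y)$ and $d_i(z)$ exist as elements of $\partial g_i$ at the respective optimizers (which is standard since the subproblems are proper convex minimizations over convex constraints) and then use monotonicity in the right direction so that the nonsmooth contribution is discarded rather than carried through. Everything else is a routine combination of strong convexity and the assumed Lipschitz property of $\nabla_{x_i}\widetilde f_i(x_i,\cdot)$.
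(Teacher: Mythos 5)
Your proof is correct and is essentially the standard argument for this lemma; the paper itself does not reprove it but defers to \cite[Lemma 2]{facchinei2013flexible}, whose proof proceeds exactly as you describe (add the two block-wise optimality conditions, cancel or discard the nonsmooth contribution, split the gradient difference through $\nabla_{x_i}\widetilde{f}_i(\xhat_i(z),y)$, use strong monotonicity with modulus $\tau$ against the $\widetilde{L}$-Lipschitz dependence on the second argument, and aggregate over blocks to get the $\sqrt{n}$). The only cosmetic difference is that you phrase the optimality conditions with explicit subgradients $d_i\in\partial g_i$ and invoke monotonicity of $\partial g_i$, whereas one can equivalently write them in the variational-inequality form $\langle\nabla_{x_i}\widetilde{f}_i(\xhat_i(y),y),x_i-\xhat_i(y)\rangle+g_i(x_i)-g_i(\xhat_i(y))\ge 0$ (the form the paper uses elsewhere, e.g.\ in the proof of Theorem~\ref{thm:IterCmplNonCvx}), in which case the $g_i$ terms cancel identically upon adding and no subdifferential decomposition or constraint qualification is needed.
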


Having derived the above result, we are now ready to state our first result which studies the limiting behavior of the PSCA algorithm. This result is based on the sufficient decrease of the objective function which has been also exploited in~\cite{facchinei2013flexible} for greedy variable selection rule.
\begin{thm}
\label{thm:convergenceNonconvex}
Assume $\gamma^r \in (0,1]$, $\sum_{r=1}^\infty \gamma^r = + \infty$, and that $\limsup_{r\rightarrow \infty} \gamma^r <\bar{\gamma} \triangleq \min\{\frac{\tau}{L_{\nabla f}}, \frac{\tau}{\tau + \widetilde{L}\sqrt{n}}\}$. Suppose either cyclic or randomized block selection rule is employed. For cyclic update rule, assume further that $\{\gamma^r\}_{r=1}^\infty$ is a monotonically decreasing sequence. Then every limit point of the iterates is a stationary point of \eqref{eq:OriginalProblem} -- deterministically for cyclic update rule and almost surely for randomized block selection rule.
\end{thm}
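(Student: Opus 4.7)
The strategy is a sufficient-decrease argument converted into the fixed-point identity $\widehat{x}(x^*)=x^*$ via Lemma~\ref{lem:xhatLipschitz}; stationarity of $x^*$ then follows from gradient consistency. First I would derive the per-iteration descent bound. For $i\in S^r$, optimality of $\widehat{x}_i^r$ for the $\tau$-strongly convex surrogate $\widetilde{h}_i(\cdot,x^r)$ yields $\widetilde{h}_i(\widehat{x}_i^r,x^r)\le \widetilde{h}_i(x_i^r,x^r)-(\tau/2)\|\widehat{x}_i^r-x_i^r\|^2$, and convexity of $\widetilde{f}_i(\cdot,x^r)$ together with the gradient-consistency identity $\nabla_{x_i}\widetilde{f}_i(x_i^r,x^r)=\nabla_{x_i} f(x^r)$ turns this into
\[
\langle\nabla_{x_i} f(x^r),\widehat{x}_i^r-x_i^r\rangle + g_i(\widehat{x}_i^r)-g_i(x_i^r) \;\le\; -\tfrac{\tau}{2}\|\widehat{x}_i^r-x_i^r\|^2.
\]
Summing over $i\in S^r$ and combining with the $L_{\nabla f}$-descent lemma on $f$ and the Jensen bound $g_i(x_i^{r+1})\le(1-\gamma^r)g_i(x_i^r)+\gamma^r g_i(\widehat{x}_i^r)$ yields
\[
h(x^{r+1}) \;\le\; h(x^r) - \tfrac{\gamma^r(\tau-\gamma^r L_{\nabla f})}{2}\sum_{i\in S^r}\|\widehat{x}_i^r-x_i^r\|^2,
\]
whose coefficient is eventually bounded below by a positive constant $c$ because $\limsup_r \gamma^r<\tau/L_{\nabla f}$.

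Second, I would telescope and pass to summability. Since $h$ is bounded below, $\sum_{r}\gamma^r\sum_{i\in S^r}\|\widehat{x}_i(x^r)-x_i^r\|^2<\infty$ holds deterministically in the cyclic case. For the randomized case, conditioning on $\mathcal{F}_r=\sigma(x^0,\ldots,x^r)$ and using $\Pr(j\in S^r\mid\mathcal{F}_r)\ge p_{\min}$ gives
\[
\mathbb{E}\Bigl[\textstyle\sum_{i\in S^r}\|\widehat{x}_i(x^r)-x_i^r\|^2\,\Big|\,\mathcal{F}_r\Bigr] \;\ge\; p_{\min}\,\|\widehat{x}(x^r)-x^r\|^2,
\]
so a Robbins--Siegmund supermartingale argument delivers $\sum_r\gamma^r\|\widehat{x}(x^r)-x^r\|^2<\infty$ almost surely. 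Third, I would show $u^r:=\|\widehat{x}(x^r)-x^r\|\to 0$. The reverse triangle inequality, Lemma~\ref{lem:xhatLipschitz}, and $\|x^{r+1}-x^r\|=\gamma^r(\sum_{i\in S^r}\|\widehat{x}_i^r-x_i^r\|^2)^{1/2}$ combine to give $|u^{r+1}-u^r|\le (1+\widehat{L})\gamma^r u^r$, so $u^r$ cannot oscillate rapidly; the second branch of $\bar\gamma$, namely $\tau/(\tau+\widetilde{L}\sqrt{n})=1/(1+\widehat{L})$, keeps $(1+\widehat{L})\gamma^r$ bounded below one. A classical oscillation argument—lower-bounding $\sum\gamma^r(u^r)^2$ along excursions of $u^r$ between two fixed levels to contradict $\sum\gamma^r=\infty$—then rules out $\limsup u^r>0$ under the randomized rule. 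For the cyclic rule, an additional per-cycle step is required because the summability only controls active blocks $i\in S^r$; here I would exploit that each block is visited exactly once per length-$m$ cycle, together with the monotone decrease of $\gamma^r$ and the Lipschitz bound from Lemma~\ref{lem:xhatLipschitz}, to link $u^{mq}$ to the restricted norms accumulated across one cycle and again conclude $u^r\to 0$.

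Finally, for any convergent subsequence $x^{r_k}\to x^*$ (deterministic in the cyclic case, almost sure in the randomized case), Lemma~\ref{lem:xhatLipschitz} gives $\widehat{x}(x^{r_k})\to\widehat{x}(x^*)$, and $u^{r_k}\to 0$ forces $\widehat{x}(x^*)=x^*$. Each $x_i^*$ is therefore the minimizer of $\widetilde{h}_i(\cdot,x^*)$ on $\mathcal{X}_i$, so there is $d_i\in\partial g_i(x_i^*)$ with $\langle\nabla_{x_i}\widetilde{f}_i(x_i^*,x^*)+d_i,\,x_i-x_i^*\rangle\ge 0$ for all $x_i\in\mathcal{X}_i$; gradient consistency substitutes $\nabla_{x_i}f(x^*)$ for $\nabla_{x_i}\widetilde{f}_i(x_i^*,x^*)$, and summing over $i$ produces the stationarity inequality. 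I expect the main obstacle to be the cyclic-case passage from $\sum_{i\in S^r}\|\widehat{x}_i^r-x_i^r\|^2$ to the full $\|\widehat{x}(x^r)-x^r\|^2$: the block updated at iteration $r$ is not the one we ultimately wish to compare against at the limit, and it is precisely the interplay between $\widehat{L}$, the monotonicity of $\gamma^r$, and the length-$m$ cycle structure that bridges this gap.
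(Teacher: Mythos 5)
Your plan reproduces the paper's argument essentially step for step: the same sufficient-decrease inequality $h(x^{r+1})\le h(x^r)-\beta\gamma^r\|\xhat^r-x^r\|^2_{S^r}$, the supermartingale convergence theorem for the randomized rule, the excursion/oscillation argument built on Lemma~\ref{lem:xhatLipschitz} and the fact that $\bar\gamma\le 1/(1+\widehat{L})$ to upgrade $\liminf_r\|\xhat^r-x^r\|=0$ to a full limit, the per-block subsequence argument exploiting monotonicity of $\gamma^r$ for the cyclic rule, and the fixed-point/gradient-consistency conclusion at limit points. The only nit is that the excursion argument derives its contradiction from the summability $\sum_r\gamma^r\|\xhat^r-x^r\|^2<\infty$ rather than from $\sum_r\gamma^r=\infty$ (the latter is used only to obtain the $\liminf$); otherwise this is the paper's proof.
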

\begin{proof}
Using the standard sufficient decrease argument (see the supplementary materials), one can show that
\begin{equation}
h(x^{r+1}) \leq h(x^r) +  \frac{\gamma^r(-\tau +\gamma^rL_{\nabla f})}{2} \|\xhat^r - x^r\|^2_{S^r}. \label{eq:SuffDecStepSize}
\end{equation}
\normalsize
Since $\limsup_{r\rightarrow \infty} \gamma^r < \bar{\gamma}$, for sufficiently large $r$, there exists $\beta>0$ such that
\begin{equation}
\label{eq:SuffDecNonCond}
h(x^{r+1}) \leq h(x^r) - \beta\gamma^r \|\xhat^r - x^r\|^2_{S^r}.
\end{equation}
Taking the conditional expectation from both sides implies
\begin{align}
\mathbb{E}[h(x^{r+1})\mid x^r] \leq h(x^r) - \beta\gamma^r \mathbb{E}\left[\sum_{i=1}^n R_i^r\|\xhat_i^r - x_i^r\|^2\mid x^r\right],
\label{eq:SuffDecCond}
\end{align}
where $R_i^r$ is a Bernoulli random variable which is one if $i\in S^r$ and it is zero otherwise. Clearly, $\mathbb{E}[R_i^r \mid x^r] = p_i^r$ and therefore,
\begin{equation}
\label{eq:temp3Thmnncvx}
\mathbb{E}[h(x^{r+1}) \mid x^r] \leq h(x^r) - \beta\gamma^r p_{\min}\|\xhat^r - x^r\|^2, \;\;\forall r.
\end{equation}
Thus $\{h(x^r)\}$ is a supermartingale with respect to the natural history; and by the supermartingale convergence theorem \cite[Proposition 4.2]{bertsekas1996neuroBook}, $h(x^r)$ converges and we have
\begin{align}
\sum_{r=1}^{\infty} \gamma^r\|\xhat^r - x^r\|^2 <\infty, \quad {\rm almost\;surely}. \label{eq:sprmtglcnvggammax}
\end{align}
Let us now restrict our analysis to the set of probability one for which $h(x^r)$ converges and $\sum_{r=1}^{\infty}\gamma^r\|\xhat^r - x^r\|^2 <\infty$. Fix a realization in that set. The  equation \eqref{eq:sprmtglcnvggammax} simply implies that, for the fixed realization, $\liminf_{r\rightarrow \infty} \|\xhat^r - x^r\| = 0$, since $\sum_r\gamma^r = \infty$. Next we strengthen this result by proving that $\lim_{r\rightarrow \infty} \|\xhat^r - x^r\| = 0$. Suppose the contrary that there exists $\delta>0$ such that $\Delta^r\triangleq \|\xhat^r- x^r\| \geq 2\delta$ infinitely often. Since $\liminf_{r\rightarrow \infty} \Delta^r = 0$, there exists a subset of indices $\mathcal{K}$ and $\{i_r\}$ such that for any $r \in \mathcal{K}$,
\begin{align}
&\Delta^r < \delta, \quad 2\delta<\Delta^{i_r}, \quad {\rm and}\quad\delta\leq \Delta^j\leq 2\delta,\;\forall j=r+1,\ldots,i_r-1. \label{eq:temp4Thmnncvx}
\end{align}
Clearly,
\normalsize
\begin{align}
\delta -\Delta^{r} &\stackrel{\rm (i)}\leq\Delta^{r+1} - \Delta^{r} = \|\xhat^{r+1} - x^{r+1}\| - \|\xhat^{r} - x^r\| \stackrel{\rm (ii)}\leq \|\xhat^{r+1} - \xhat^r\| + \|x^{r+1} - x^r\| \nonumber\\
&\stackrel{\rm (iii)}\leq (1+\widehat{L})\|x^{r+1} - x^r\|\stackrel{\rm (iv)}= (1+\widehat{L}) \gamma^r \|\xhat^r - x^r\| \leq (1+\widehat{L}) \gamma^r \delta,\label{eq:tmpThmnncvxIneqSeries}
\end{align}
\normalsize
where $\mbox{(i)}$ and $\mbox{(ii)}$ are due to \eqref{eq:temp4Thmnncvx} and the triangle inequality, respectively. The inequality $\mbox{(iii)}$ is the result of Lemma~\ref{lem:xhatLipschitz}; and $\mbox{(iv)}$ is followed from the algorithm iteration update rule. Since $\limsup_{r\rightarrow \infty} \gamma^r <\frac{1}{1+\widehat{L}}$, the above inequality implies that there exists an $\alpha>0$ such that
\begin{align}
\Delta^r > \alpha, \label{eq:temp7Thmnncvx}
\end{align}
for all $r$ large enough.
Furthermore, since the chosen realization satisfies~\eqref{eq:sprmtglcnvggammax}, we have that
$\lim_{r\rightarrow \infty}\sum_{t=r}^{i_r-1} \gamma^t (\Delta^t)^2 = 0$; which combined with
\eqref{eq:temp4Thmnncvx} and \eqref{eq:temp7Thmnncvx}, implies
\begin{align}
\lim_{r \rightarrow \infty} \sum_{t=r}^{i_r-1} \gamma^t= 0. \label{eq:temp9Thmnncvx}
\end{align}
On the other hand, using the similar reasoning as in above, one can write
\begin{align}
\delta &<\Delta^{i_r} - \Delta^r = \|\xhat^{i_r} - x^{i_r}\|-\|\xhat^{r} - x^{r}\| \leq \|\xhat^{i_r} - \xhat^r\| + \|x^{i_r} - x^r\|\nonumber\\
&\leq (1+\widehat{L}) \sum_{t=r}^{i_r-1} \gamma^t \|\xhat^t - x^t\|\leq 2\delta (1+\widehat{L})\sum_{t=r}^{i_r-1}\gamma^t, \nonumber
\end{align}
and hence $\liminf_{r \rightarrow \infty} \sum_{t=r}^{i_r-1} \gamma^t > 0$, which contradicts \eqref{eq:temp9Thmnncvx}. Therefore the contrary assumption does not hold and we must have $ \lim_{r\rightarrow \infty} \|\xhat^r - x^r\| = 0, \; {\rm almost \; surely}.$ Now consider a limit point $\xbar$ with the subsequence $\{x^{r_j}\}_{j=1}^\infty$ converging to $\xbar$. Using the definition of $\xhat^{r_j}$, we have
$
\lim_{j\rightarrow \infty} \widetilde{h}_i (\xhat_i^{r_j},x^{r_j}) \leq \widetilde{h}_i (x_i,x^{r_j}),\quad \forall x_i \in \mathcal{X}_i,\;\forall i.
$
Therefore, by letting $j \rightarrow \infty$ and using the fact that $ \lim_{r\rightarrow \infty} \|\xhat^r - x^r\| = 0, \; {\rm almost \; surely}$, we obtain
$
\widetilde{h}_i (\xbar_i,\xbar) \leq \widetilde{h}_i (x_i,\xbar),\quad \forall x_i \in \mathcal{X}_i,\;\forall i, \;{\rm almost \;surely};
$
which in turn, using the gradient consistency assumption, implies
\[
\langle \nabla f(\xbar) + d, x-\xbar\rangle \geq 0, \;\forall x \in \cX, \;\;{\rm almost\;surely,}
\]
for some $d \in \partial g(\xbar)$, which completes the proof for the randomized block selection rule.

Now consider the cyclic update rule with a limit point $\xbar$. Due to the sufficient decrease bound \eqref{eq:SuffDecNonCond}, we have $\lim_{r\rightarrow \infty}h(x^r) = h(\xbar).$
Furthermore, by taking the summation over \eqref{eq:SuffDecNonCond}, we obtain
$
\sum_{r=1}^\infty \gamma^r \|\xhat^r - x^r\|_{S^r}^2 < \infty.
$
Consider a fixed block $i$ and define $\{r_k\}_{k=1}^\infty$ to be the subsequence of iterations that block $i$ is updated in. Clearly, $\sum_{k=1}^\infty \gamma^{r_k} \|\xhat_i^{r_k} - x_i^{r_k}\|^2 < \infty$ and $\sum_{k=1}^\infty \gamma^{r_k} = \infty$, since $\{\gamma^r\}$ is monotonically decreasing. Therefore, $\liminf_{k\rightarrow \infty}\|\xhat_i^{r_k} - x_i^{r_k}\| = 0$. Repeating the above argument with some slight modifications, which are omitted due to lack of space, we can show that $\lim_{k\rightarrow \infty}\|\xhat_i^{r_k} - x_i^{r_k}\| = 0$ implying that the limit point $\xbar$ is a stationary point of \eqref{eq:OriginalProblem}.
$\hfill\blacksquare$
\end{proof}
\begin{rmk}
Theorem~\ref{thm:convergenceNonconvex} covers both diminishing and constant step-size selection rule; or the combination of the two, i.e., decreasing the step-size until it is less than the constant $\bar{\gamma}$. It is also worth noting that the diminishing step-size rule is especially useful when the knowledge of the problem's constants $L,\tilde{L}$, and $\tau$ is not available.
\end{rmk}

\section{Convergence Analysis: Iteration Complexity}
In this section, we present iteration complexity analysis of the algorithm for both convex and non-convex cases.
\subsection{Convex Case}
When the function $f(\cdot)$ is convex, the overall objective function will become convex; and as a result of Theorem ~\ref{thm:convergenceNonconvex}, if a limit point exists, it is a global minimizer of~\eqref{eq:OriginalProblem}. In this scenario, it is desirable to derive the iteration complexity bounds of the algorithm. Note that our algorithm employs linear combination of the two consecutive points at each iteration and hence it is different than the existing algorithms in \cite{necoaradistributed,richtarik2012parallel,richtarik2012efficient ,richtarik2013optimal, fercoq2013accelerated,fercoq2014fast,fercoq2013smooth, patrascu2013random}. Therefore, not only in the cyclic case, but also in the randomized scenario, the iteration complexity analysis of PSCA is different than the existing results and should be investigated. Let us make the following assumptions for our iteration complexity analysis:
\begin{itemize}
\item The step-size is constant with $\gamma^r = \gamma < \frac{\tau}{L_{\nabla f}}, \; \forall r$.
\item The level set $\{x\mid h(x) \leq h(x^0)\}$ is compact and the next two assumptions hold in this set.
\item The nonsmooth function $g(\cdot)$ is Lipschitz continuous, i.e., \normalsize $|g(x)-g(y)| \leq L_g\|x-y\|, \;\forall x,y\in\mathcal{X}$. \normalsize This assumption is satisfied in many practical problems such as (group) Lasso.
\item The gradient of the approximation function $\widetilde{f}_i(\cdot,y)$ is uniformly Lipschitz with constant $L_i$, i.e.,
$
\|\nabla_{x_i} \widetilde{f}_i (x_i,y) - \nabla_{x'_i} \widetilde{f}_i (x'_i,y)\| \leq L_i\|x_i - x'_i\|,\;\forall x_i,x'_i \in \mathcal{X}_i.
$
\end{itemize}
\begin{lem}
\label{lem:SuffDecCond}
{\bf(Sufficient Descent)}  There exists $\widehat{\beta},\widetilde{\beta}>0$, such that for all $r \geq 1$, we have
\begin{itemize}
\item For randomized rule: $\mathbb{E}[h(x^{r+1})\mid x^r] \leq h(x^r) - \widehat{\beta} \|\xhat^r - x^r\|^2.$
\item For cyclic rule: $h(x^{m(r+1)}) \leq h(x^{mr}) - \widetilde{\beta} \|x^{m(r+1)} - x^{mr}\|^2.$
\end{itemize}
\end{lem}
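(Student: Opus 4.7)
The plan is to reuse directly the per-iteration bound (eq:SuffDecStepSize) that was already derived inside the proof of Theorem~\ref{thm:convergenceNonconvex}. Since the step-size is now constant with $\gamma < \tau/L_{\nabla f}$, the coefficient $\gamma(-\tau+\gamma L_{\nabla f})/2$ is a strictly negative number independent of $r$; call it $-\beta_0$, with $\beta_0 = \gamma(\tau-\gamma L_{\nabla f})/2 > 0$. Hence for every iterate, unconditionally,
\[
h(x^{r+1}) \leq h(x^r) - \beta_0 \,\|\xhat^r - x^r\|^2_{S^r},
\]
where $\|\cdot\|^2_{S^r}$ denotes the sum of the squared block-norms over blocks in $S^r$. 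Everything that follows is downstream of this inequality.

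For the randomized rule, the argument mirrors the derivation of (eq:temp3Thmnncvx) in Theorem~\ref{thm:convergenceNonconvex}. I would take conditional expectation on both sides, introduce the Bernoulli indicators $R_i^r = \mathbf{1}\{i\in S^r\}$ so that $\|\xhat^r-x^r\|^2_{S^r} = \sum_i R_i^r \|\xhat_i^r - x_i^r\|^2$, and use $\mathbb{E}[R_i^r \mid x^r] = p_i^r \geq p_{\min}$. Since $\xhat^r$ is a deterministic function of $x^r$, this yields
\[
\mathbb{E}[h(x^{r+1})\mid x^r] \leq h(x^r) - \beta_0 p_{\min} \|\xhat^r - x^r\|^2,
\]
so one may take $\widehat{\beta} = \beta_0 p_{\min}$.

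For the cyclic rule, I would telescope the per-iteration bound across one full cycle of length $m$:
\[
h(x^{m(r+1)}) \leq h(x^{mr}) - \beta_0 \sum_{\ell=0}^{m-1} \|\xhat^{mr+\ell} - x^{mr+\ell}\|^2_{S^{mr+\ell}}.
\]
The key observation is that because $\{\mathcal{T}_0,\ldots,\mathcal{T}_{m-1}\}$ is a partition of $\{1,\ldots,n\}$, each block $i$ is updated at exactly one sub-iteration $\ell(i)$ within the cycle. Consequently, for each $i$,
\[
x_i^{m(r+1)} - x_i^{mr} = x_i^{mr+\ell(i)+1} - x_i^{mr+\ell(i)} = \gamma \bigl(\xhat_i^{mr+\ell(i)} - x_i^{mr+\ell(i)}\bigr),
\]
and summing over $i$ gives the clean identity
\[
\|x^{m(r+1)} - x^{mr}\|^2 = \gamma^2 \sum_{\ell=0}^{m-1} \|\xhat^{mr+\ell} - x^{mr+\ell}\|^2_{S^{mr+\ell}}.
\]
Plugging this into the telescoped inequality yields the claim with $\widetilde{\beta} = \beta_0/\gamma^2 = (\tau - \gamma L_{\nabla f})/(2\gamma)$.

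I do not foresee any real obstacle: the only delicate point is the bookkeeping in the cyclic case to confirm that the block $i$ is touched exactly once during the cycle, so that the displacement $x_i^{m(r+1)} - x_i^{mr}$ coincides with the single step $\gamma(\xhat_i^{mr+\ell(i)} - x_i^{mr+\ell(i)})$; this is immediate from the disjointness of the $\mathcal{T}_\ell$'s together with the update rule in Algorithm~\ref{Alg:PSCA}. The randomized bound, in turn, is essentially a restatement of a step already performed in Theorem~\ref{thm:convergenceNonconvex}, so the novelty of the lemma lies entirely in the cyclic identity above, which converts a sum of squared best-response gaps into a squared cycle-displacement.
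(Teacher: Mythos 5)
Your proposal is correct and follows exactly the route the paper intends: it specializes the sufficient-decrease inequality \eqref{eq:SuffDecStepSize}--\eqref{eq:SuffDecNonCond} to a constant step-size, takes conditional expectation with the Bernoulli indicators for the randomized case, and telescopes over one cycle using the partition identity $\|x^{m(r+1)}-x^{mr}\|^2=\gamma^2\sum_{\ell}\|\xhat^{mr+\ell}-x^{mr+\ell}\|^2_{S^{mr+\ell}}$ for the cyclic case. Your constants $\widehat{\beta}=\beta\gamma p_{\min}$ and $\widetilde{\beta}=\beta/\gamma$ coincide with those stated in the paper's proof.
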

\begin{proof}
The above result is an immediate consequence of \eqref{eq:SuffDecNonCond} with $\widehat{\beta} \triangleq\beta \gamma p_{\min}$ and $\widetilde{\beta} \triangleq \frac{\beta}{\gamma}$. $\hfill\blacksquare$
\end{proof}
Due to the bounded level set assumption, there must exist constants $\widehat{Q},Q,R>0$ such that
\begin{align}
\|\nabla  f(x^r)\| \leq Q, \quad \quad \|\nabla_{x_i} \widetilde{f}_i(\xhat^r,x^r)\|\leq \widehat{Q},\quad \quad\|x^r - x^*\|\leq R, \label{eq:QQhatQ}
\end{align}
for all $x^r$. Next we use the constants $Q$, $\widehat{Q}$ and $R$ to bound the cost-to-go in the algorithm.
\begin{lem}
\label{lem:CostToGo}
{\bf(Cost-to-go Estimate)}  For all $r \geq 1$, we have
\begin{itemize}
\item For randomized rule: $\left(\mathbb{E}[h(x^{r+1})\mid x^r] - h(x^*)\right)^2 \leq  2 \left((Q + L_g)^2 + nL^2R^2\right)\|\xhat^{r} - x^r\|^2$
\item For cyclic rule: $\left(h(x^{m(r+1)}) - h(x^*)\right)^2\leq 3n\frac{\theta(1-\gamma)^2}{\gamma^2}\|x^{m(r+1)} - x^{mr}\|^2$
\end{itemize}
for any optimal point $x^*$, where $L \triangleq \max_i\{L_i\}$ and $\theta\triangleq L_g^2 + \widehat{Q}^2 + 2nR^2\tilde{L}^2\frac{\gamma^2}{(1-\gamma)^2} + 2R^2L^2$.
\end{lem}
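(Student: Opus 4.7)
The overall strategy is the same in both cases: establish a linear bound of the form ``suboptimality $\le$ (sum of a few error terms) $\times$ step length,'' then square via $(a_1+\cdots+a_k)^2\le k\sum_j a_j^2$ to obtain the stated sum-of-squares form (with $k=2$ in the randomized case and $k=3$ in the cyclic case).

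For the randomized rule, Lemma~\ref{lem:SuffDecCond} gives $\mathbb{E}[h(x^{r+1})\mid x^r]\le h(x^r)$, so it suffices to prove $h(x^r)-h(x^*)\le (Q+L_g)\|\xhat^r-x^r\|+\sqrt{n}\,LR\,\|\xhat^r-x^r\|$. I would start from convexity, writing $h(x^r)-h(x^*)\le\langle\nabla f(x^r),x^r-x^*\rangle+g(x^r)-g(x^*)$, and split $g(x^r)-g(x^*)=[g(x^r)-g(\xhat^r)]+[g(\xhat^r)-g(x^*)]$. The first piece is bounded by $L_g\|\xhat^r-x^r\|$ via Lipschitz continuity of $g$. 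For the second, the first-order optimality of $\xhat_i^r$ as the minimizer of $\widetilde{h}_i(\cdot,x^r)$ gives $d_i^{\xhat}\in\partial g_i(\xhat_i^r)$ with $\langle\nabla_{x_i}\widetilde{f}_i(\xhat_i^r,x^r)+d_i^{\xhat},x_i^*-\xhat_i^r\rangle\ge 0$, and combining with the subgradient inequality yields $g(\xhat^r)-g(x^*)\le\sum_i\langle\nabla_{x_i}\widetilde{f}_i(\xhat_i^r,x^r),x_i^*-\xhat_i^r\rangle$. Substituting back and using gradient consistency $\nabla_{x_i}f(x^r)=\nabla_{x_i}\widetilde{f}_i(x_i^r,x^r)$ regroups the inner-product terms as $\sum_i\langle\nabla_{x_i}f(x^r),x_i^r-\xhat_i^r\rangle$ (bounded by $Q\|\xhat^r-x^r\|$ using $\|\nabla f\|\le Q$) plus $\sum_i\langle\nabla_{x_i}\widetilde{f}_i(x_i^r,x^r)-\nabla_{x_i}\widetilde{f}_i(\xhat_i^r,x^r),\xhat_i^r-x_i^*\rangle$. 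The latter is controlled by the first-argument Lipschitz constant $L$, and splitting $\xhat_i^r-x_i^*=(\xhat_i^r-x_i^r)+(x_i^r-x_i^*)$ together with $\|x^r-x^*\|\le R$ and $\sum_i\|\xhat_i^r-x_i^r\|\le\sqrt{n}\,\|\xhat^r-x^r\|$ yields the $\sqrt{n}\,LR\,\|\xhat^r-x^r\|$ contribution. Squaring completes the randomized case.

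For the cyclic rule, over a cycle of length $m$ each block $i$ is updated exactly once, at some iteration $mr+\ell_i$; because block $i$ is otherwise unchanged, $x_i^{mr+\ell_i}=x_i^{mr}$ and $x_i^{m(r+1)}=x_i^{mr}+\gamma(\xhat_i^{mr+\ell_i}-x_i^{mr})$, which yields the key identity $\xhat_i^{mr+\ell_i}-x_i^{m(r+1)}=\frac{1-\gamma}{\gamma}(x_i^{m(r+1)}-x_i^{mr})$ that is the source of the $(1-\gamma)^2/\gamma^2$ prefactor. I would then mimic the randomized template, applying convexity of $h$ at $x^{m(r+1)}$ and invoking the optimality of each block-specific $\xhat_i^{mr+\ell_i}$. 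The essential new complication is that each subproblem is solved at a distinct base point $x^{mr+\ell_i}$ that may differ from $x^{m(r+1)}$ by an intra-cycle drift of norm at most $R$; the second-argument Lipschitz constant $\widetilde{L}$ converts this drift into a gradient error, producing the $2nR^2\widetilde{L}^2\gamma^2/(1-\gamma)^2$ summand of $\theta$. The other three summands come from the Lipschitz bound on $g$ ($L_g^2$), the uniform bound $\|\nabla_{x_i}\widetilde{f}_i(\xhat_i^{mr+\ell_i},x^{mr+\ell_i})\|\le\widehat{Q}$ ($\widehat{Q}^2$), and the first-argument Lipschitz term combined with $R$ ($2R^2L^2$); grouping these three error sources (factor $3$) together with the $\sqrt{n}$ loss each accumulates via Cauchy--Schwarz (factor $n$ upon squaring) produces the leading $3n$.

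The step I expect to require the most care is the cyclic case: keeping all intra-cycle iterates implicitly controlled in terms of the endpoints $x^{mr}$ and $x^{m(r+1)}$, converting each $\|\xhat_i^{mr+\ell_i}-x_i^{mr}\|$ into $\frac{1}{\gamma}\|x_i^{m(r+1)}-x_i^{mr}\|$ so that the block-wise squared sum reassembles into $\|x^{m(r+1)}-x^{mr}\|^2$, and juggling the correct powers of $\gamma$, $(1-\gamma)$, $n$, and $R$ are all bookkeeping-intensive; any slip in this accounting corrupts the composite form of $\theta$.
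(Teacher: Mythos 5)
Your overall strategy---convexity of $h$ at the current point, the first-order optimality condition of each subproblem to control $g(\xhat^r)-g(x^*)$, gradient consistency to merge the inner products, and a final $(a_1+\cdots+a_k)^2\le k\sum_j a_j^2$---is the right one; the paper relegates this proof to its supplementary material, but the shape of the constants $(Q+L_g)^2+nL^2R^2$ and of $\theta$ reverse-engineers to exactly this argument. Two specific steps in your execution, however, do not close as written. In the randomized case, after reaching $\sum_i L_i\|x_i^r-\xhat_i^r\|\,\|\xhat_i^r-x_i^*\|$ you split $\xhat_i^r-x_i^*=(\xhat_i^r-x_i^r)+(x_i^r-x_i^*)$. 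The second piece gives an $LR$-type contribution, but the first leaves the quadratic remainder $L\sum_i\|x_i^r-\xhat_i^r\|^2=L\|\xhat^r-x^r\|^2$, which is not of the linear form $C\|\xhat^r-x^r\|$ and cannot be absorbed into $(Q+L_g)\|\xhat^r-x^r\|+\sqrt{n}LR\|\xhat^r-x^r\|$ without a separate uniform bound on $\|\xhat^r-x^r\|$ (available from compactness of the level set and continuity of $\xhat(\cdot)$, but then the constant is no longer the stated one). The clean route is not to split: bound $\|\xhat_i^r-x_i^*\|\le R$ directly---the constant $R$ in \eqref{eq:QQhatQ} must be read as valid on the compact set containing both the iterates and their best responses, exactly as the definition of $\widehat{Q}$ already presumes---and then $\sum_i\|x_i^r-\xhat_i^r\|\le\sqrt{n}\,\|\xhat^r-x^r\|$ delivers $\sqrt{n}LR\|\xhat^r-x^r\|$ in one step.

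In the cyclic case you describe the intra-cycle drift $\|x^{m(r+1)}-x^{mr+\ell_i}\|$ as being ``of norm at most $R$.'' Taken literally this breaks the proof: if the drift were only bounded by a constant, the $\widetilde{L}$-term would contribute roughly $n\widetilde{L}R\cdot R$, a quantity not proportional to $\|x^{m(r+1)}-x^{mr}\|$, and the cost-to-go estimate could not close. What makes the argument work is that only blocks not yet updated at time $mr+\ell_i$ differ between $x^{mr+\ell_i}$ and $x^{m(r+1)}$, and each such block moves by exactly $\gamma(\xhat_j^{mr+\ell_j}-x_j^{mr})=x_j^{m(r+1)}-x_j^{mr}$, so that $\|x^{m(r+1)}-x^{mr+\ell_i}\|\le\|x^{m(r+1)}-x^{mr}\|$; the $R$ appearing in the $2nR^2\widetilde{L}^2$ summand of $\theta$ then comes from $\|x_i^{m(r+1)}-x_i^*\|\le R$, not from the drift itself. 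Your key identity $\xhat_i^{mr+\ell_i}-x_i^{m(r+1)}=\frac{1-\gamma}{\gamma}\bigl(x_i^{m(r+1)}-x_i^{mr}\bigr)$ and the $3\times2$ grouping of squares are correct and do account for the structure of $\theta$, so the fix is localized to these two bounding steps.
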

\begin{proof}
Please see the supplementary materials for the proof.
\end{proof}
Lemma~\ref{lem:SuffDecCond} and Lemma~\ref{lem:CostToGo} lead to the iteration complexity bound in the following theorem. The proof steps of this result are similar to the ones in \cite{hong2013iteration} and therefore omitted here for space reasons.
\begin{thm}
Define $\sigma \triangleq \frac{(\gamma L_{\nabla f} - \tau) \gamma p_{\min}}{4\left((Q+L_g)^2 + n L^2R^2 \right)}$ and $\widetilde{\sigma} \triangleq \frac{(\gamma L_{\nabla f} - \tau) \gamma}{6n\theta (1-\gamma)^2}$. Then
\begin{itemize}
\item For randomized update rule: $\mathbb{E}\left[h(x^r)\right]  - h(x^*)\leq \frac{\max\{4\sigma -2, h(x^0) - h(x^*), 2 \}}{\sigma} \frac{1}{r}.$
\item For cyclic update rule: $h(x^{mr}) - h(x^*) \leq \frac{\max\{4\widetilde{\sigma} -2, h(x^0) - h(x^*), 2 \}}{\widetilde{\sigma}} \frac{1}{r}.$
\end{itemize}
\end{thm}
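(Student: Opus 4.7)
The plan is to combine the Sufficient Descent of Lemma~\ref{lem:SuffDecCond} with the Cost-to-go Estimate of Lemma~\ref{lem:CostToGo} to obtain a quadratic recursion on the optimality gap, and then apply the standard argument that yields an $O(1/r)$ rate. For the randomized rule, let $\Delta^r \triangleq \mathbb{E}[h(x^r)] - h(x^*)$; for the cyclic rule, let $\Delta^r \triangleq h(x^{mr}) - h(x^*)$. First I would show monotonicity and nonnegativity of $\Delta^r$: Lemma~\ref{lem:SuffDecCond} (after taking total expectation in the randomized case) gives $\Delta^{r+1} \le \Delta^r$, and $\Delta^r \ge 0$ since $x^*$ is optimal; this also implies $x^r$ stays in the level set where Lemma~\ref{lem:CostToGo} applies.

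Next I would derive the recursion $\Delta^{r+1} \le \Delta^r - \sigma\,(\Delta^{r+1})^2$. In the randomized case, take total expectation in Lemma~\ref{lem:CostToGo} and use Jensen's inequality on the convex function $t \mapsto t^2$ applied to $Y = \mathbb{E}[h(x^{r+1})\mid x^r] - h(x^*)$ to obtain
\[
(\Delta^{r+1})^2 \;=\; \bigl(\mathbb{E}[Y]\bigr)^2 \;\le\; \mathbb{E}[Y^2] \;\le\; 2\bigl((Q+L_g)^2 + nL^2R^2\bigr)\,\mathbb{E}\|\xhat^r - x^r\|^2.
\]
Substituting this into the expected version of Lemma~\ref{lem:SuffDecCond} (with $\widehat{\beta} = \beta\gamma p_{\min}$ and $\beta \propto \tau - \gamma L_{\nabla f}$) cancels the $\mathbb{E}\|\xhat^r - x^r\|^2$ factor and produces the recursion with exactly the $\sigma$ stated in the theorem (modulo a sign convention in the numerator). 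The cyclic case is parallel: treat an $m$-block as a single super-iteration so that Lemma~\ref{lem:SuffDecCond} contributes $\widetilde{\beta}\|x^{m(r+1)} - x^{mr}\|^2$ on the right and Lemma~\ref{lem:CostToGo} bounds $(\Delta^{r+1})^2$ by $3n\theta(1-\gamma)^2/\gamma^2$ times the same displacement, yielding $\Delta^{r+1}\le\Delta^r - \widetilde{\sigma}(\Delta^{r+1})^2$.

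Now I would apply the standard sublinear-rate lemma to the recursion $\Delta^{r+1} \le \Delta^r - \sigma(\Delta^{r+1})^2$. Rewriting as $\Delta^r - \Delta^{r+1} \ge \sigma(\Delta^{r+1})^2$ and dividing by $\Delta^r\Delta^{r+1}$ (positive by monotonicity) yields
\[
\frac{1}{\Delta^{r+1}} - \frac{1}{\Delta^r} \;\ge\; \sigma\,\frac{\Delta^{r+1}}{\Delta^r}.
\]
Split into two cases. If $\Delta^{r+1} \ge \Delta^r/2$ (slow-progress regime), the right-hand side is at least $\sigma/2$, so telescoping gives $1/\Delta^r \ge 1/\Delta^0 + r\sigma/2$. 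If $\Delta^{r+1} < \Delta^r/2$ (fast-progress regime), the gap halves in one step and we can use monotonicity afterward. A short induction on $r$, seeded by the trivial bound $\Delta^0 \le h(x^0) - h(x^*)$ and the intermediate threshold $\Delta^r \le 2$ (which handles the constant in the $\max$), produces
\[
\Delta^r \;\le\; \frac{\max\{4\sigma - 2,\; h(x^0) - h(x^*),\; 2\}}{\sigma}\cdot\frac{1}{r},
\]
exactly as stated; the cyclic case uses the same argument with $\widetilde{\sigma}$ in place of $\sigma$ and the super-iteration index.

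I expect the main obstacles to be the Jensen step (establishing that the random-case recursion is truly on $\Delta^r = \mathbb{E}[h(x^r)] - h(x^*)$ rather than on a conditional quantity) and the bookkeeping of the $\max$ in the numerator, which is where the three constants $4\sigma-2$, $h(x^0)-h(x^*)$, and $2$ enter through the case split. Everything else is direct substitution, and, as the authors note, the inductive argument is essentially that of \cite{hong2013iteration}.
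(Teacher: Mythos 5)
Your proposal is correct and follows exactly the route the paper intends: the paper omits this proof, deferring to \cite{hong2013iteration}, and your combination of Lemma~\ref{lem:SuffDecCond} with Lemma~\ref{lem:CostToGo} via Jensen's inequality to obtain the recursion $\Delta^{r+1}\le\Delta^r-\sigma(\Delta^{r+1})^2$, followed by the standard sublinear-rate lemma, is precisely that argument (and you correctly note that the stated $\sigma,\widetilde{\sigma}$ carry a sign typo in the numerator, since $\gamma<\tau/L_{\nabla f}$ makes $\gamma L_{\nabla f}-\tau$ negative).
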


\subsection{Non-convex Case}
In this subsection we study the iteration complexity of the proposed randomized algorithm for the general nonconvex function $f(\cdot)$ assuming constant step-size selection rule. This analysis is only for the randomized block selection rule. Since in the nonconvex scenario, the iterates may not converge to the global optimum point, the closeness to the optimal solution  cannot be considered for the iteration complexity analysis. Instead, inspired by~\cite{nesterov2004introductory} where the size of the gradient of the objective function is used as a measure of optimality, we consider the size of the objective proximal gradient as a measure of optimality. More precisely, we define
\[
\widetilde{\nabla} h(x) = x - \arg\min_{y \in \mathcal{X}}\; \left\{\langle \nabla f(x) , y - x\rangle + g(y) + \frac{1}{2} \|y-x\|^2\right\}.
\]
Clearly, $\widetilde{\nabla} h(x) = 0$ when $x$ is a stationary point. Moreover, $\widetilde{\nabla}h(\cdot)$ coincides with the gradient of the objective if $g\equiv 0$ and $\mathcal{X} = \re^n$. The following theorem, which studies the decrease rate  of $\|\widetilde{\nabla}h(x)\|$, could be viewed as an iteration complexity analysis of the randomized PSCA.

\begin{thm}
\label{thm:IterCmplNonCvx}
Consider randomized block selection rule. Define $T_\epsilon$ to be the first time that $\mathbb{E}[\|\widetilde{\nabla} h(x^r)\|^2] \leq \epsilon$. Then $T_\epsilon \leq \kappa / \epsilon$ where $\kappa \triangleq \frac{2(L^2 + 2L + 2) (h(x^0) - h^*)}{\widehat{\beta}}$ and $h^* = \min_{x\in \mathcal{X}} h(x)$.
\end{thm}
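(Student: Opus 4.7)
The plan is to relate the proximal-gradient residual $\|\widetilde{\nabla}h(x^r)\| = \|x^r - \tilde{x}^r\|$ (where $\tilde{x}^r$ denotes the minimizer in the definition of $\widetilde{\nabla}h$) to the quantity $\|\xhat^r - x^r\|$ that already appears in the randomized Sufficient Decrease Lemma~\ref{lem:SuffDecCond}, and then telescope. The key structural observation is that both $\tilde{x}_i^r$ and $\xhat_i^r$ can be written as fixed points of the prox operator of $g_i + \iota_{\mathcal{X}_i}$. Indeed, by the first-order optimality conditions, $\tilde{x}_i^r = \mathrm{prox}_{g_i+\iota_{\mathcal{X}_i}}(x_i^r - \nabla_{x_i} f(x^r))$, while the optimality condition for the block subproblem that defines $\xhat_i^r$, combined with the gradient-consistency assumption $\nabla_{x_i}\widetilde{f}_i(x_i^r,x^r) = \nabla_{x_i} f(x^r)$, gives $\xhat_i^r = \mathrm{prox}_{g_i+\iota_{\mathcal{X}_i}}(\xhat_i^r - \nabla_{x_i}\widetilde{f}_i(\xhat_i^r,x^r))$.

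Once these two fixed-point identities are in place, I would invoke the firm-nonexpansiveness of the prox operator to write
\[
\|\xhat_i^r - \tilde{x}_i^r\| \;\le\; \bigl\|(\xhat_i^r - x_i^r) - \bigl(\nabla_{x_i}\widetilde{f}_i(\xhat_i^r,x^r) - \nabla_{x_i}\widetilde{f}_i(x_i^r,x^r)\bigr)\bigr\| \;\le\; (1+L)\,\|\xhat_i^r - x_i^r\|,
\]
using gradient consistency to identify $\nabla_{x_i} f(x^r)$ with $\nabla_{x_i}\widetilde{f}_i(x_i^r,x^r)$ and then the blockwise Lipschitz assumption $\|\nabla_{x_i}\widetilde{f}_i(\cdot,x^r)\|_{\mathrm{Lip}} \le L_i \le L$. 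Squaring, summing over blocks, and applying the triangle inequality $\|x^r - \tilde{x}^r\|^2 \le 2\|x^r - \xhat^r\|^2 + 2\|\xhat^r - \tilde{x}^r\|^2$ yields
\[
\|\widetilde{\nabla}h(x^r)\|^2 \;\le\; 2\bigl(1 + (1+L)^2\bigr)\|\xhat^r - x^r\|^2 \;=\; 2(L^2+2L+2)\,\|\xhat^r - x^r\|^2,
\]
which is the constant appearing in $\kappa$.

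To finish, I would combine this deterministic bound with the randomized Sufficient Descent of Lemma~\ref{lem:SuffDecCond}, namely $\widehat{\beta}\,\|\xhat^r - x^r\|^2 \le h(x^r) - \mathbb{E}[h(x^{r+1})\mid x^r]$. Taking full expectations, chaining the two inequalities, and telescoping from $r = 0$ to $r = T-1$ gives
\[
\sum_{r=0}^{T-1} \mathbb{E}\bigl[\|\widetilde{\nabla}h(x^r)\|^2\bigr] \;\le\; \frac{2(L^2+2L+2)}{\widehat{\beta}}\bigl(h(x^0) - h^*\bigr) \;=\; \kappa.
\]
By definition of $T_\epsilon$, every term in the sum over $r = 0,\ldots,T_\epsilon-1$ strictly exceeds $\epsilon$, so $T_\epsilon \,\epsilon < \kappa$, hence $T_\epsilon \le \kappa/\epsilon$.

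I expect the only nontrivial step to be the first one: recognizing that the two \emph{different} prox interpretations (the true prox at $x^r$ for $\tilde{x}^r$ versus the self-referential prox at $\xhat^r$ for $\xhat^r$) can be compared via nonexpansiveness after a single invocation of gradient consistency. Once that comparison is in hand, everything downstream is the standard nonconvex complexity telescoping argument à la Nesterov, and the specific constant $L^2+2L+2$ is simply $1+(1+L)^2$ coming from the triangle inequality together with the $(1+L)$ factor produced by the nonexpansiveness estimate.
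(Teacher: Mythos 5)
Your proposal is correct and follows essentially the same route as the paper: bound $\|\xhat_i^r - \widetilde{y}_i^r\|$ by $(1+L_i)\|x_i^r-\xhat_i^r\|$, apply the triangle inequality to get the constant $2(L^2+2L+2)$, and telescope against the randomized sufficient-decrease bound. The only cosmetic difference is that you package the comparison of the two optimality conditions as nonexpansiveness of the prox operator (after one use of gradient consistency), whereas the paper writes out and adds the two variational inequalities directly; these are the same argument.
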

\begin{proof}
To simplify the presentation of the proof, let us define
\normalsize
$
\widetilde{y}_i^r \triangleq \arg \min_{y_i \in \mathcal{X}_i} \langle \nabla_{x_i} f(x^r) , y_i - x_i^r\rangle + g_i(y_i) +\frac{1}{2} \|y_i - x_i^r\|^2.
$
\normalsize
Clearly, $\widetilde{\nabla} h (x^r) = \left(x_i^r- \widetilde{y}_i^r\right)_{i=1}^n$.  The first order optimality condition of the above optimization problem implies
\normalsize
\begin{align}
\langle \nabla_{x_i} f(x^r) + \widetilde{y}_i^r - x_i^r , x_i - \widetilde{y}_i^r\rangle + g_i(x_i) - g_i(\widetilde{y}_i^r) \geq 0,\quad \forall x_i \in \mathcal{X}_i. \label{eq:tmpThmItCmpNncn1}
\end{align}
\normalsize
Furthermore, based on the definition of $\xhat_i^r$, we have
\normalsize
\begin{align}
\langle \nabla_{x_i} \widetilde{f}_i(\xhat_i^r, x^r) , x_i - \xhat_i^r\rangle + g_i(x_i) - g_i(\xhat_i^r) \geq 0,\quad \forall x_i \in \mathcal{X}_i. \label{eq:tmpThmItCmpNncn2}
\end{align}
\normalsize
Plugging in the points $\xhat_i^r$ and $\widetilde{y}_i^r$ in \eqref{eq:tmpThmItCmpNncn1} and \eqref{eq:tmpThmItCmpNncn2}; and summing up the two equations will yield to
\normalsize
\[
\langle\nabla_{x_i} \widetilde{f}_i (\xhat_i^r, x^r) - \nabla_{x_i} f(x^r) + x_i^r - \widetilde{y}_i^r , \widetilde{y}_i^r-\xhat_i^r\rangle \geq 0.
\]
\normalsize
Using the gradient consistency assumption, we can write
\normalsize
\[
\langle\nabla_{x_i} \widetilde{f}_i (\xhat_i^r, x^r) - \nabla_{x_i} \widetilde{f}_i(x_i^r,x^r) + x_i^r - \xhat_i^r +\xhat_i^r- \widetilde{y}_i^r , \widetilde{y}_i^r-\xhat_i^r\rangle \geq 0,
\]
\normalsize
or equivalently, $\langle\nabla_{x_i} \widetilde{f}_i (\xhat_i^r, x^r) - \nabla_{x_i} \widetilde{f}_i(x_i^r,x^r) + x_i^r - \xhat_i^r , \widetilde{y}_i^r-\xhat_i^r\rangle \geq \|\xhat_i^r - \widetilde{y}_i^r\|^2.$ Applying Cauchy-Schwarz and the triangle inequality will yield to
\normalsize
\[
\left(\|\nabla_{x_i} \widetilde{f}_i (\xhat_i^r, x^r) - \nabla_{x_i} \widetilde{f}_i(x_i^r,x^r)\| +  \|x_i^r - \xhat_i^r\|  \right)\|\widetilde{y}_i^r-\xhat_i^r\|\geq \|\xhat_i^r - \widetilde{y}_i^r\|^2.
\]
\normalsize
Since the function $\widetilde{f}_i(\cdot,x)$ is Lipschitz, we must have
\normalsize
\begin{align}
\|\xhat_i^r- \widetilde{y}_i^r\| \leq (1+L_i) \|x_i^r- \xhat_i^r\| \label{eq:tmpThmItCmpNncn3}
\end{align}
\normalsize
Using the inequality \eqref{eq:tmpThmItCmpNncn3}, the norm of the proximal gradient of the objective can be bounded by
\normalsize
\begin{align}
\|\widetilde{\nabla} h(x^r)\|^2 & = \sum_{i=1}^n \|x_i^r - \widetilde{y}_i^r\|^2 \leq 2\sum_{i=1}^n \left(\|x_i^r - \xhat_i^r\|^2 + \|\xhat_i^r - \widetilde{y}_i^r\|^2 \right) \nonumber\\
& \leq 2 \sum_{i=1}^n \left(\|x_i^r - \xhat_i^r\|^2 + (1+L_i)^2\|x_i^r - \xhat_i^r\|^2 \right) \leq 2 (2 + 2L + L^2) \|\xhat^r - x^r\|^2. \nonumber
\end{align}
\normalsize
Combining the above inequality with the sufficient decrease bound in \eqref{eq:SuffDecCond}, one can write
\normalsize
\begin{align}
&\sum_{r=0}^T\mathbb{E}\left[\|\widetilde{\nabla}h(x^r)\|^2\right] \leq \sum_{r=1}^T  2 (2 + 2L + L^2) \mathbb{E}\left[\|\xhat^r - x^r\|^2\right] \nonumber\\
&  \leq \sum_{r=0}^T  \frac{2 (2 + 2L + L^2)}{\widehat{\beta}} \mathbb{E}\left[h(x^r) - h(x^{r+1})\right]  \leq  \frac{2 (2 + 2L + L^2)}{\widehat{\beta}} \mathbb{E}\left[h(x^0) - h(x^{T+1})\right]  \nonumber\\
&\leq  \frac{2 (2 + 2L + L^2)}{\widehat{\beta}} \left[h(x^0) - h^* \right] = \kappa, \nonumber
\end{align}
\normalsize
which implies that $T_\epsilon \leq \frac{\kappa}{\epsilon}$. $\hfill\blacksquare$
\end{proof}

\section{Numerical Experiments:}
In this short section, we compare the numerical performance of the proposed algorithm with the classical serial BCD methods. The algorithms are evaluated over the following Lasso problem:
\[
\min_{x}\;\;\frac{1}{2} \|Ax - b\|_2^2 + \lambda\|x\|_1,
\]
where the matrix $A$ is generated according to the Nesterov's approach \cite{nesterov2013gradient}. Two problem instances are considered: $A\in \mathbb{R}^{2000\times 10,000}$ with $1\%$ sparsity level in $x^*$ and  $A\in \mathbb{R}^{1000\times 100,000}$ with $0.1\%$ sparsity level in $x^*$. The approximation functions are chosen similar to the numerical experiments in \cite{facchinei2013flexible}, i.e., block  size is set to one  ($m_i = 1,\;\forall i$) and the approximation function 
\[
\widetilde{f}(x_i,y) =   f(x_i,y_{-i}) + \frac{\alpha}{2}\|x_i - y_i\|^2
\]
is considered, where $f(x) = \frac{1}{2}\|Ax - b\|^2$ is the smooth part of the objective function. We choose constant step-size $\gamma$ and proximal coefficient $\alpha$. In general, careful selection of   the algorithm parameters  results in better numerical convergence rate. The smaller values of step-size $\gamma$ will result in less zigzag behavior for the convergence path of the algorithm; however, too small step sizes will  clearly slow down the convergence speed. Furthermore, in order to make the approximation function sufficiently strongly convex, we need to choose $\alpha$ large enough. However, choosing too large $\alpha$ values enforces the next iterates to stay close to the current iterate and results in slower convergence speed; see the supplementary materials for related examples.

Figure~\ref{fig:Small} and Figure~\ref{fig:Big} illustrate the behavior of cyclic and randomized parallel BCD method as compared with their serial counterparts. The serial methods ``Cyclic BCD" and ``Randomized BCD" are based on the update rule in \eqref{EQ:BCD} with the cyclic and randomized block selection rules, respectively. The variable $q$ shows the number of processors and on each processor we update $40$ scalar variables in parallel. As can be seen in Figure~\ref{fig:Small} and Figure~\ref{fig:Big}, parallelization of the BCD algorithm results in more efficient algorithm. However, the computational gain does not grow linearly with the number of processors. In fact, we can see that after some point, the increase in the number of processors lead to slower convergence. This fact is due to the communication overhead among the processing nodes which dominates the computation time; see the supplementary materials for more numerical experiments on this issue.
\begin{figure*}[ht]
\hspace{-0.75cm}
    \begin{minipage}[t]{0.55\linewidth}
    \centering
     {\includegraphics[width=
1\linewidth]{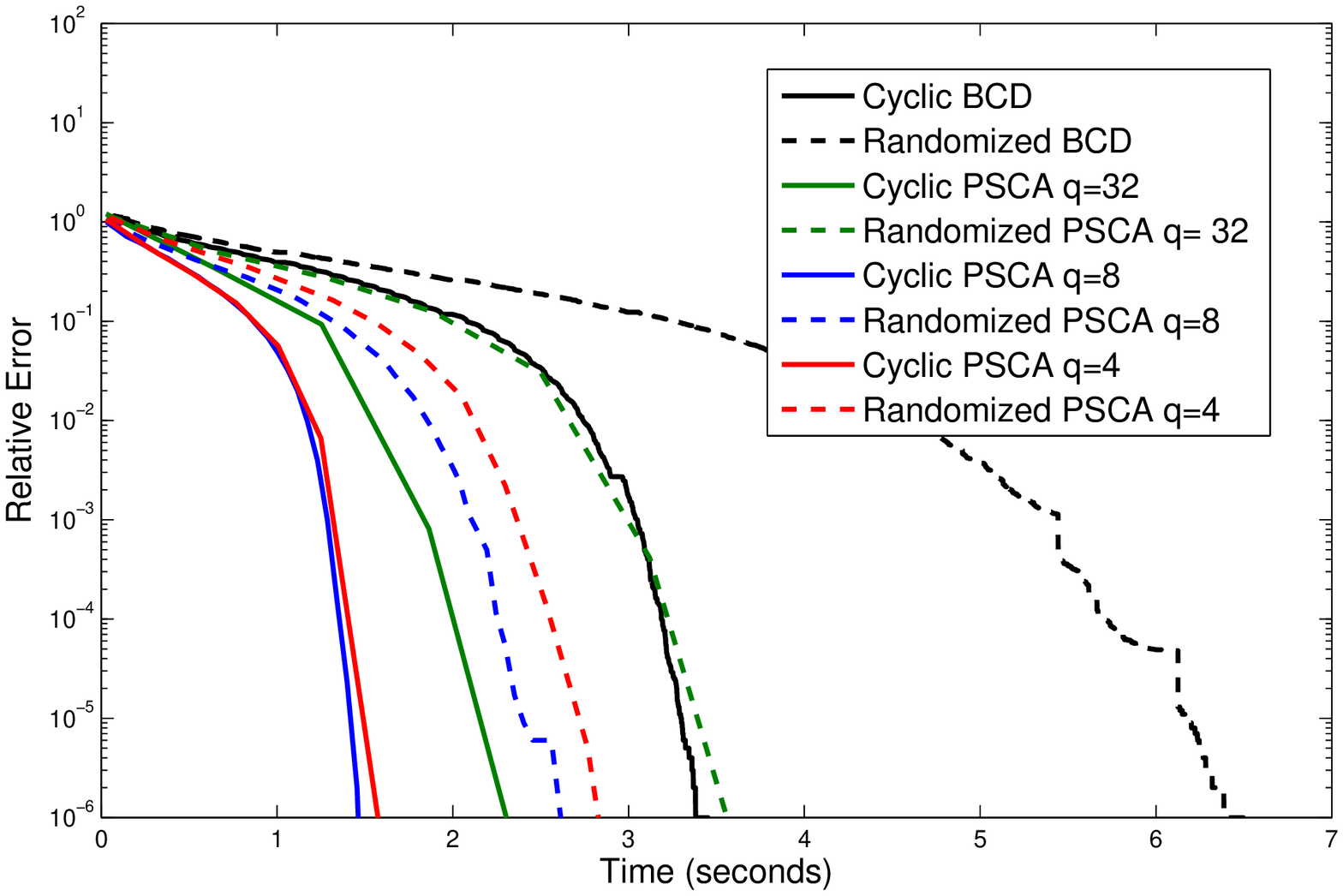} \caption{\small
{Lasso Problem: $A \in \mathbb{R}^{2,000\times 10,000}$}}\label{fig:Small} }
\end{minipage}\hfill
    \begin{minipage}[t]{0.55\linewidth}
    \centering
    {\includegraphics[width=
1\linewidth]{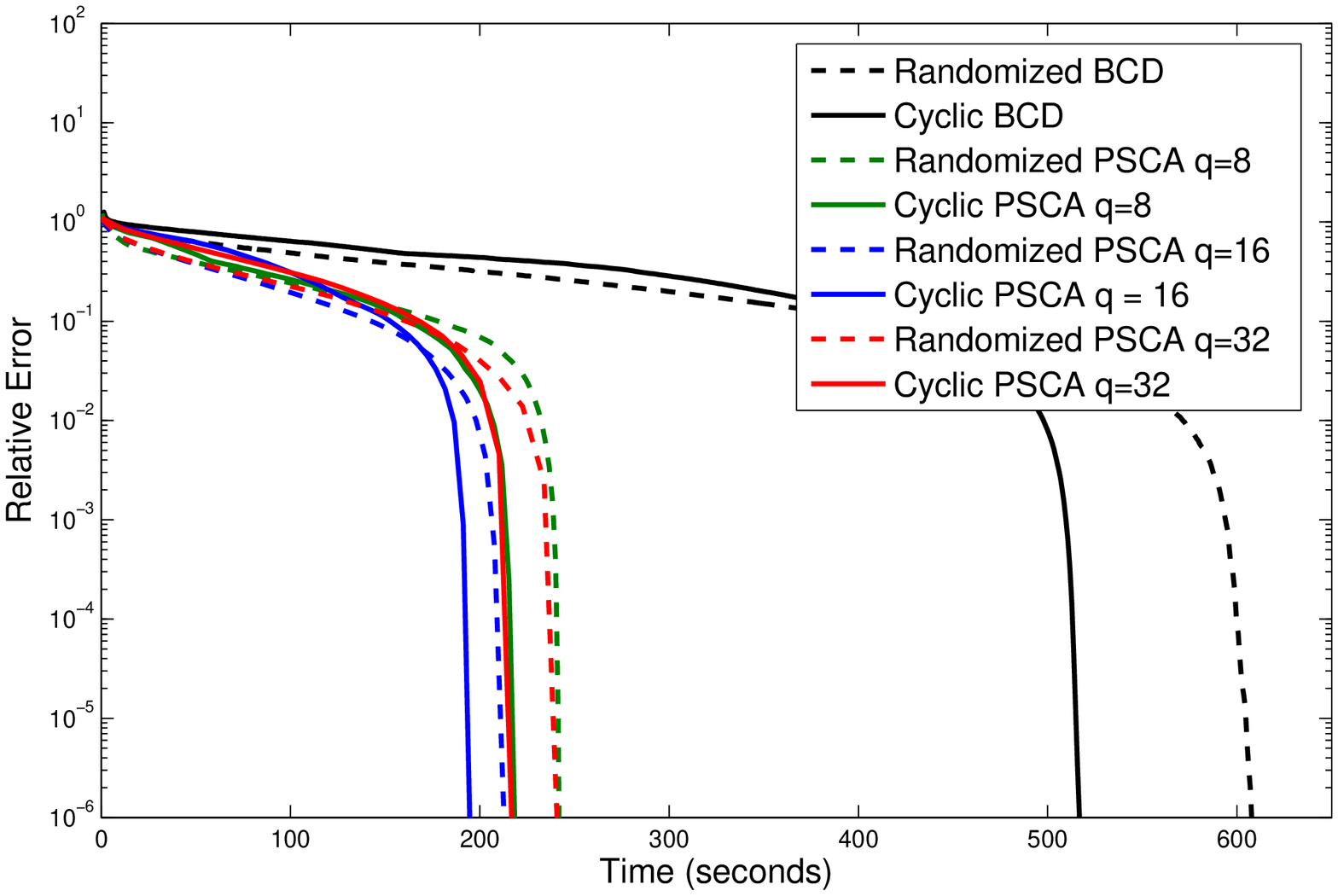} \caption{\small
{Lasso Problem: $A \in \mathbb{R}^{1,000\times 100,000}$}}\label{fig:Big} }
\end{minipage}
    \end{figure*}


{\bf Acknowledgments:}
The authors are grateful to  the University of Minnesota Graduate School Doctoral Dissertation Fellowship and AFOSR, grant number FA9550-12-1-0340 for the support during this research.

\footnotesize
\bibliographystyle{unsrt}
\bibliography{IEEEabrv,biblio}

\end{document}